\def\ind{\textrm{ind}}
\def\L{\mathcal{L}}
\def\X{\mathcal{X}}
\def\K{\mathcal{K}}
\def\C{\mathcal{C}}
\def\R{\mathcal{R}}
\def\H{\mathcal{H}}
\newtheorem{df}{Definition}[section]
\newtheorem{thm}[df]{Theorem}
\newtheorem{cor}[df]{Corollary}
\newtheorem{rema}[df] {Remark}
\begin{document}

\title[Analytically Riesz operators]{\bf Analytically Riesz operators and \\ Weyl and Browder type theorems}
\author[Enrico Boasso]{Enrico Boasso}


\maketitle

\begin{abstract}\noindent Several spectra of analytically Riesz operators will be characterized. These results
will led to prove  Weyl and Browder type theorems for the aforementioned class of operators. 
\end{abstract}

\section{Introduction}

\noindent Recall that given a Banach space $\X$ and an operator $T\in \L(\X)$, $T$ is said to be polynomially compact (respectively
polynomially Riesz), if there exists a complex polynomial $P$ such that $P(T)$ is a compact operator (respectively a Riesz operator).
The structure and the spectrum of a polynomially compact operators is well known; in fact, it was characterized in \cite{Gil}.
In addition, these results were extended to polynomially Riesz bounded and linear maps in \cite{Geo}.\par 

\indent More generally, an operator $T$ is said to be analytically Riesz, if  there exists an analytical function $f$
defined on a neighbourhood of the spectrum of $T$ such that $f(T)$ is Riesz. The structure and the spectrum of analytically
Riesz operators were studied in \cite{KS}. In particular, according to \cite[Theorem 1]{KS}, such an operator can be
decomposed as $T=T_0\oplus S$, where $f$ is locally zero at each point of the spectrum of $T_0$ but it is not locally zero  at any point of the
spectrum of $S$. In addition, $S$ either acts on a finite dimensional space 
or it has the decomposition $S=T_1\oplus\dots \oplus T_n$, and in this last case the spectra of $T_i$ are disjoint  and there are $\lambda_i$
with the property that $T_i-\lambda_i$ are Riesz and $f(\lambda_i)=0$ ($i=1,\dots ,n$).\par

\indent The first objective of this article is to characterize several spectra of analytically Riesz operators. These spectra are defined in the infinite
dimensional context and among others they are the Fredholm spectrum, the Weyl spectrum, the Browder spectrum, the upper Fredholm spectrum, the Weyl essential 
approximate point spectrum and the Browder essential approximate point spectrum. Moreover, the spectrum of $T$ will be fully described. This will be done in section 3.
To this end, however, some restrictions need to be considered.\par

\indent In fact, according to \cite[Theorem 1]{KS}, since $f$ is locally zero at each point of the spectrum of $T_0$, no information concerning the spectrum of $T_0$ can be obtained. 
Actually, note that $f(T)=f(T_0)\oplus f(S)$ and $f(T_0)=0$. What is more,
if $S$ is defined on a finite dimensional Banach space, then it has no sense to study the aforementioned spectra. As a result, the analytically Riesz operators 
 that will be considered in this article will be the following: $T$ will be a bounded and linear map
 defined on an infinite dimensional Banach space $\X$ and the holomorphic function $f$ with the property that $f(T)$ is Riesz will be assumed not to be locally zero at
any point of the spectrum of $T$. Naturally, polynomially Riesz and polynomially compact operators
belong to this subclass of analytically Riesz operators.\par

\indent In addition, the results obtained in section 3 will be applied  to prove  Weyl and Browder type theorems for
analytically Riesz operators. This will be done in section 4.\par

\section{Preliminary definitions and facts}

\indent Fron now on  $\X$ will denote an infinite dimensional complex Banach space and $\L(\X)$  the algebra of all bounded and linear maps
defined on and with values in $\X$. If $T\in \L(\X)$, then $N(T)$ and $R(T)$ will stand for the null space and the
range of $T$, respectively.  Note that $I\in \L (\X)$ will be the identity map defined on $\X$. In addition, $\K(\X)\subset \L (\X)$ will denote
the closed ideal of compact operators defined on $\X$, $\C (\X)$ the Calkin algebra of $\X$ and $\pi\colon \L (\X)\to\C (\X)$ the quotient map. Furthermore,
$\X^*$ will stand for the dual space of $\X$ and if $T\in \L (\X)$, then $T^*\in \L (\X^*)$ will denote the dual operator.  Moreover, given $T\in \L (\X)$, $\sigma(T)$ will
stand for the spectrum of $T$ and if  $K\subset \mathbb{C}$, then acc $K$ will be the set of limit points of $K$ and iso $K=K\setminus$ acc $K$  the set of isolated points of $K$. \par  

\indent Recall that $T\in L(\X)$  is said to be  a
\it Fredholm \rm operator if $\alpha(T)=\dim N(T)$ and $\beta(T)=\dim \X/R(T)$
are finite dimensional, in which case its \it index \rm is given by
$$
\ind(T)=\alpha(T)-\beta (T).
$$
If  $\alpha (T)$ and $\beta (T)$ are finite and equal, so that the index is zero,
$T$ is said to be a \it Weyl \rm operator. The set of Fredholm operators will be
dented by $\Phi (\X)$.\par

\indent The \it ascent \rm  (respectively \it descent\rm ) of
$T\in \L(\X)$ is the smallest non-negative integer $a$
(respectively $d$) such that $N(T^a)=N(T^{a+1})$ (respectively
$R(T^d)=R(T^{d+1})$); if such an integer does not exist, then
$asc(T)=\infty$ (respectively $dsc(T)=\infty$). The
operator $T$ will be said to be \it Browder\rm,
if it is Fredholm and its ascent and descent are finite.\par

These classes of operators  generate the
Fredholm or essential spectrum, the Weyl spectrum and the Browder spectrum,
which will be denoted by $\sigma_e(T)$, $\sigma_w(T)$ and $\sigma_b(T)$ respectively ($T\in\L (\X)$).
It is well known that
$$
\sigma_e(T)\subseteq \sigma_w(T)\subseteq \sigma_b(T)= \sigma_e(T)\cup \hbox{ acc }\sigma (T)\subseteq \sigma (T).
$$

\indent Recall that $T\in\L (\X)$ is said to be a \it Riesz operator\rm, if $\sigma_e(T)=\{0\}$.
The set of all Riesz operators defined on $\X$ will be denoted by $\R(\X)$. 
More generally, $T$ will be said to be an \it analytically Riesz operator\rm, if there exists a holomorphic function $f$ defined on an open 
neighbourhood of $\sigma (T)$  such that $f(T)\in \R (\X)$
($\H (\sigma(T))$ will denote the algebra of germs of analytic functions defined on open neighbourhoods of $\sigma (T)$).
In particular, $T$ will be said to be \it polynomially Riesz \rm (respectively  \it polynomially compact\rm),
 if there exists $P\in C[X]$ such that $P(T)\in \R(\X)$ (respectively $P(T)\in \K(\X)$). 

\indent The concept of Fredholm operator has been generalized. 
An operator $T\in \L(\X)$ will be said to be
 \it B-Fredholm\rm, if there exists
$n\in\mathbb N$ for which $R(T^n)$ is closed and the
induced operator $T_n\in \L(R(T^n))$ is Fredholm. Note that if for some $n\in\mathbb N$,
$T_n\in \L(R(T^n))$ is Fredholm, then $T_m\in \L(R(T^m))$ is
Fredholm for all $m\ge n$; moreover $\ind (T_n)=\ind (T_m)$, for
all $m\ge n$. Therefore, it makes sense to define the index of
$T$ by $\ind (T)=\ind (T_n)$. Recall that $T$ is said to be \it
 \it B-Weyl\rm, if $T$ is  B-Fredholm and
$\ind(T)=0$. Moreover, $T$ is said to be a \it B-Browder \rm operator, if
there is some $n\in \mathbb N$ such that $R(T^n)$ is closed, $T_n\in L(R(T^n))$
is Fredholm and $asc(T_n)$ and $dsc(T_n)$ are finite.
Naturally, from these classes of operators, the
B-Fredholm spectrum, the B-Weyl spectrum and the B-Browder spectrum of $T\in \L(\X)$ can be derived,
which will be denoted by $\sigma_ {BF}(T)$, $\sigma_ {BW}(T)$ and $\sigma_ {BB}(T)$,
respectively. Clearly,
$$
\sigma_ {BF}(T)\subseteq  \sigma_ {BW}(T)\subseteq \sigma_ {BB}(T)\subseteq \sigma (T).
$$

\indent On the other hand, recall that a Banach space operator $T\in \L(\X)$ is said to be Drazin invertible,
if there exists a necessarily unique $S\in \L(\X)$ and some $m\in \mathbb N$ such that
$$
T^m=T^mST, \hskip.3truecm STS=S, \hskip.3truecm ST=TS.
$$
Recall that necessary and sufficient for $T$ to be  Drazin invertible is that 
$asc (T)$ and $des (T)$ are finite (\cite[Theorem 4]{K}). If $DR(\L(\X))=\{ A\in \L(\X)\colon
A\hbox{ is Drazin invertible} \}$, then the Drazin spectrum of
$T\in \L(\X)$ is the set $\sigma_{DR}(T)=\{\lambda \in \mathbb{C}\colon
T-\lambda I\notin DR(\L(\X)) \}$, see \cite{BS, Bo3}. Note that according to \cite[Theorem 3.6]{Be} and \cite[Theorem 4]{K}, $\sigma_{BB}(T)=\sigma_{DR}(T)$. 
In particular,  $\sigma_ {BW}(T)\subseteq \sigma_ {DR}(T)\subseteq \sigma (T)$.
\par

\indent In order to state the (generalized) Weyl's and the (generalized) Browder's theorems, some
sets need to be recalled.
\par

\indent  Let $T\in \L(\X)$ and denote by $E(T)= \{\lambda\in
\hbox{iso}\hskip.1truecm \sigma(T)\colon 0<\alpha(A-\lambda)\}$ (respectively by
$E_0(T)= \{\lambda\in E(T)\colon \alpha(A-\lambda)<\infty\}$) the
set of eigenvalues of $T$ that are isolated in the spectrum of
$T$ (respectively the eigenvalues of finite multiplicity of $T$ 
isolated in $\sigma(T)$). In addition, denote  by $\Pi (T)=
\{\lambda\in \sigma (T)\colon
0<asc(T-\lambda)=dsc(T-\lambda)<\infty\}$ (respectively
$\Pi_0(T)=\{\lambda\in \Pi(T)\colon \alpha(T-\lambda)<\infty\}$)
the set of poles of $T$ (respectively the set of poles of finite rank of
$T$). Note that $\sigma (T)\setminus\sigma_{DR}(T)=\Pi(T)$ (\cite[Theorem 4]{K})
and $\sigma (T)\setminus\sigma_b(T)=\Pi_0 (T)$ (\cite[Proposition 2]{Ba}).
In particular, $\sigma_{DR} (T)\subseteq \sigma_b (T)$.\par

\begin{df}Consider a Banach space $\X$ and $T\in \L(\X)$. Then, it will be said that\par
\rm (i)\it Weyl's theorem ($Wt$) holds for $T$, if $\sigma_w(T)=\sigma(T)\setminus E_0(T)$.\par
\rm (ii) \it Generalized Weyl's theorem ($gWt$) holds for $T$, if $\sigma_{BW}(T)=\sigma(T)\setminus E(T)$.\par
\rm (iii) \it Browder's theorem ($Bt$) holds for $T$, if $\sigma_w(T)=\sigma(T)\setminus \Pi_0(T)$.\par
\rm (iv) \it Generalized Browder's theorem ($gBt$) holds for $T$, if $\sigma_{BW}(T)=\sigma(T)\setminus \Pi(T)$.
\end{df}

\indent Next the definitions of (generalized) $a$-Weyl and (generalized) $a$-Browder theorems will be recalled.
To this end, however, some preparation is needed first.\par

\indent  Recall that $T\in \L(\X)$ is said to be
\it bounded below\rm, if $N(T)=0$ and $R(T)$ is closed. Denote
the \it approximate point spectrum \rm of $T$ by
$\sigma_a(T)=\{\lambda\in \mathbb C \colon A-\lambda I \hbox{ is
not bounded below} \}$. In addition, if $R(T)$ is closed and $\alpha (T)$ is finite,
then $T\in  \L(\X)$ is said to be  \it upper semi-Fredholm\rm. Note that in this case
ind $(T)$ is well defined. Let $\sigma_{\Phi_+}(T)=\{\lambda\in \mathbb C \colon A-\lambda I \hbox{ is
not upper semi-Fredholm\rm } \}$ denote the upper Fredholm spectrum.\par
\indent The \it Weyl essential approximate point spectrum
\rm of $T\in \L(\X)$ is the set $\sigma_{aw}(T)=\{\lambda\in \sigma_a(T)\colon
T-\lambda I\hbox{ is not upper semi-Fredholm or } 0<\ind (A-\lambda I) \}$ (\cite{R2}). In addition, the \it Browder essential  approximate point
spectrum \rm of $T\in \L(\X)$ is the set
$\sigma_{ab}(T) =\{\lambda\in \sigma_a(T)\colon \lambda\in \sigma_{aw}(T)
 \hbox{ or } asc(T-\lambda I)=\infty\}$ 
 (\cite{R2}). It is clear that
$\sigma_{aw}(A)\subseteq \sigma_{ab}(A)\subseteq \sigma_a(A)$.\par
  
\indent On the other hand, upper semi B-Fredholm operators can be defined
in a similar way as B-Fredholm operators. Set $\sigma_ {SBF_+^-}(T)= \{\lambda\in
\mathbb C\colon T-\lambda I  \hbox{ is not upper semi B-Fredholm or
} 0<\ind (A-\lambda)\}$ (\cite{B6}).\par 

Next denote by $LD(\X)= \{ T\in \L(\X)\colon \hbox{  }a=
asc(T)<\infty\hbox{  and } R(T^{a+1 }) \hbox{ is}$
$\hbox{closed}\}$ the
set of \it left Drazin invertible \rm operators. Then, given
$T\in B(\X)$, the \it left Drazin spectrum \rm of $T$ is the set
$\sigma_{LD}(T)= \{\lambda\in \mathbb C\colon T-\lambda I\notin
LD(\X)\}$. Note that according to \cite[Lemma 2.12]{B6}, $\sigma_
{SBF_+^-}(T)\subseteq \sigma_{LD}(T)\subseteq \sigma_a(T)$.\par

\indent Let $T\in \L (\X)$ and denote by  $E^a(T)= \{\lambda\in
\hbox{iso}\hskip.1truecm \sigma_a(T)\colon 0<\alpha(T-\lambda I)\}$ (respectively by
$E_0^a(T)= \{\lambda\in E^a(T)\colon \alpha(T-\lambda T)<\infty\}$)
the set of eigenvalues of $T$ that are isolated in $\sigma_a(T)$
(respectively, the eigenvalues of finite multiplicity of $T$ isolated in
$\sigma_a(T)$). In addition, denote by $\Pi ^a(T)= \{\lambda\in
\sigma_a(T)\colon a=asc(T-\lambda I)<\infty \hbox{ and
}R(T-\lambda I)^{a+1} \hbox{ is closed}\}$ (respectively
$\Pi_0^a(T)= \{\lambda\in\Pi ^a(T)\colon
\alpha(T-\lambda I)<\infty\}$) the set of left poles of $T$
(respectively, the left poles of finite rank of $T$).
Clearly, $\sigma_a (T)\setminus\sigma_{LD}(T)=\Pi^a(T)$. Moreover, according to \cite[Corollary 2.2]{R2},
\cite[Corollary 1.3.3]{CPY} and \cite[Corollary 1.3.4]{CPY}, $\sigma_a (T)\setminus\sigma_{ab}(T)=\Pi^a_0(T)$. \par

\indent Next follows the definitions of (generalized) $a$-Weyl and (generalized) $a$-Browder theorems.\par

\begin{df}Consider a Banach space $\X$ and $T\in \L(\X)$. Then, it will be said that\par
\rm (i) \it $a$-Weyl's theorem ($a$-$Wt$) holds for $T$, if $\sigma_{aw}(T)=\sigma_a(T)\setminus E^a_0(T)$.\par
\rm (ii)\it Generalized $a$-Weyl's theorem ($a$-$gWt$) holds for $T$, if $\sigma_{SBF_+^-}(T)=\sigma_a(T)\setminus E^a(T)$.\par
\rm (iii) \it $a$-Browder's theorem ($a$-$Bt$) holds for $T$, if $\sigma_{aw}(T)=\sigma_a(T)\setminus \Pi_0^a(T)$.\par
\rm (iv)\it Generalized $a$-Browder's theorem ($a$-$gBt$) holds for $T$, if $\sigma_{SBF_+^-}(T)=\sigma_a(T)\setminus \Pi^a(T)$.

\end{df}

\indent Finally, recall that an operator $T\in L(\X)$ is said to
have the single--valued extension property (SVEP for short),  at a (complex)
point $\lambda_0$, if for every open disc ${\mathcal D}$ centered
at $\lambda_0$ the only analytic function $f:{\mathcal
D}\longrightarrow \X$ satisfying $(T-\lambda)f(\lambda)=0$ is the
function $f\equiv 0$. It will be said that $T$ has SVEP, if it has SVEP at every point
of $\mathbb C$. Trivially, every operator $T$ has SVEP at
points of the resolvent $\rho(A)={\C}\setminus \sigma(T)$. Also
$T$ has  SVEP at every point of the boundary $\partial \sigma(T)$ of the spectrum.
See \cite[Chapter 2]{A} for
more information on operators with SVEP.\par

\section{Spectra of analytically Riesz operators}

As it was explained in the Introduction, to study several spectra of analytically Riesz operators, it is necessary to consider a subclass of
these  operatores. To this end,  given  $T\in \L (\X)$,
set  $\H_{\R NLZ}(\sigma (T))=\{f\in\H(\sigma (T))\colon f(T)\in \R (\X) \hbox{ and } f \hbox{ is not locally zero at any point of }
\sigma (T)\}$. In the following remark some basic facts will be considered.\par

\begin{rema}\label{rem28}\rm Let $\X$ be an infinte dimensional Banach space and consider $T\in \L(\X)$ such that 
$\H_{\R NLZ}(\sigma (T))\neq\emptyset$. Let $f\in \H_{\R NLZ}(\sigma (T))$.\par
\noindent (i). According to the proof of \cite[Thorem 1]{KS}, 
$$
\sigma_e(T)\subseteq \sigma_w (T)\subseteq\sigma_b( T)\subseteq  f^{-1}(0).
$$
\noindent (ii). Recall that $\hbox{acc }\sigma (T)\subseteq \sigma_{DR} (T)$ (\cite[Theorem 12(iv)]{Bo3}).
In particular, since $\sigma_{DR} (T)\subseteq \sigma_b (T)$,
$$
\hbox{acc }\sigma (T)\subseteq \sigma_{DR} (T)\subseteq f^{-1}(0).
$$
\noindent (iii). Since $f$ is not locally zero at any point of the spectrum of $T$, the set 
 $f^{-1}_{\sigma (T)}(0)=\{\lambda\in\sigma (T)\colon f(\lambda)=0\}$ is finite. Define
$$n_T= \hbox{min} \{\sharp f^{-1}_{\sigma (T)}(0)\colon f\in \H_{\R NLZ}(\sigma (T))\}.$$ 
The function $h\in \H_{\R NLZ}(\sigma (T))$ will be said to be \it a minimal analytical function associated to $T$, \rm
if   $n_T=\sharp h^{-1}_{\sigma (T)}(0)$.\par
\noindent In addition, recall that there exists an analytic function $g$ defined on a neighbourhod of $\sigma (T)$ such that $g(z)\neq 0$, $z\in \sigma (T)$, and
$f(z)=(z-\lambda_1)^{k_1}\dots (z-\lambda_n)^{k_n}g(z)$, where  $f^{-1}_{\sigma (T)}(0)=\{\lambda_1,\dots ,\lambda_n\}$.\par
\noindent On the other hand, when $T\in \L(\X)$ is a polynomially Riesz operator (respectively a polynomially compact operator), 
a polynomial $Q\in\mathbb{C}[X]$ will be said to a \it minimal
polynomial associated to $T$, \rm if $Q(T)\in\R(\X)$ (respectively $Q(T)\in\K(\X)$), and if $P\in\mathbb{C}[X]$ is such that $P(T)\in R(\X)$ (respectively $P(T)\in\K(\X)$), then the degree of $Q$
is less or equal to the degree of $P$.\par
\noindent (iv). Note that $\sigma (T)$ is countable. In fact, it is a consequence of (ii)-(iii) and  \cite[Theorem 2.2]{Boasso}.
\end{rema}

\indent Applying Remark \ref{rem28}, the following properties can be deduced.\par

\begin{cor}\label{cor4}Let $\X$ be a Banach space and consider $T\in \L (\X)$ such that $\H_{\R NLZ}(\sigma (T))\neq\emptyset$.
Then, the following statements hold.\par
\noindent \rm (i) \it $T$ and $T^*$ have the SVEP.\par
\noindent \rm (ii) \it $\sigma (T)=\sigma_a(T)$ and $\sigma_e (T)=\sigma_{\Phi_+}(T)$.\par
\noindent \rm (iii) \it $E(T)=E^a(T)$ and $E_0(T)=E_0^a(T)$.\par
\noindent \rm (iv) \it $\Pi (T)=\Pi^a(T)$ and $\Pi_0(T)=\Pi_0^a(T)$.
\end{cor}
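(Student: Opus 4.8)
The plan is to derive all four assertions from the two facts recorded in Remark~\ref{rem28}: that $\sigma(T)$ is countable, and that $\sigma_b(T)$ --- hence also $\sigma_e(T)$, $\sigma_w(T)$ and $\sigma_{DR}(T)$ --- is contained in the finite set $f^{-1}_{\sigma(T)}(0)$. Assertion (i) then follows at once: a countable subset of $\mathbb C$ has empty interior, while the set of points at which an operator fails to have the SVEP is open and contained in its spectrum; since $\sigma(T^*)=\sigma(T)$ is countable, this set is empty both for $T$ and for $T^*$.

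For (ii) the inclusions $\sigma_a(T)\subseteq\sigma(T)$ and $\sigma_{\Phi_+}(T)\subseteq\sigma_e(T)$ are automatic, and I would obtain the reverse ones from the SVEP of $T^*$ together with the standard criteria relating it to descent and index (see \cite{A}): if $T-\lambda I$ is bounded below, then $T^*-\lambda I$ is onto, hence invertible because $T^*$ has the SVEP at $\lambda$, so $\lambda\notin\sigma(T)$; and if $T-\lambda I$ is upper semi-Fredholm, the SVEP of $T^*$ at $\lambda$ forces $dsc(T-\lambda I)<\infty$, whence $\ind(T-\lambda I)\ge 0$, and, $\alpha(T-\lambda I)$ being finite, also $\beta(T-\lambda I)<\infty$, i.e. $T-\lambda I\in\Phi(\X)$. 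Assertion (iii) is then immediate: as $\sigma_a(T)=\sigma(T)$, the two spectra have the same isolated points, so the defining conditions of $E(T)$ and $E_0(T)$ coincide verbatim with those of $E^a(T)$ and $E_0^a(T)$.

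For (iv) I would use the descriptions $\Pi_0(T)=\sigma(T)\setminus\sigma_b(T)$, $\Pi(T)=\sigma(T)\setminus\sigma_{DR}(T)$, $\Pi_0^a(T)=\sigma_a(T)\setminus\sigma_{ab}(T)$ and $\Pi^a(T)=\sigma_a(T)\setminus\sigma_{LD}(T)$ recalled in Section~2. In view of (ii), of the trivial inclusion $\Pi(T)\subseteq\Pi^a(T)$, and of the fact that $\Pi_0(T)$ and $\Pi_0^a(T)$ are cut out of $\Pi(T)$ and $\Pi^a(T)$ by the same extra condition $\alpha(T-\lambda I)<\infty$, everything reduces to the single inclusion $\Pi^a(T)\subseteq\Pi(T)$. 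So let $\lambda\in\Pi^a(T)$. A left pole of $T$ is an isolated point of $\sigma_a(T)$ (see \cite{A}), so by (ii) $\lambda$ is isolated in $\sigma(T)$, and the decomposition induced by the spectral projection at $\lambda$ yields $\X=\X_1\oplus\X_2$, $T=T_1\oplus T_2$, with $\sigma(T_1)=\{\lambda\}$ and $T_2-\lambda I$ invertible; consequently $Q:=T_1-\lambda I$ is quasinilpotent on $\X_1$ and, $T_2-\lambda I$ being invertible, inherits the left Drazin invertibility of $T-\lambda I$. The heart of the matter --- the step I expect to be the main obstacle --- is the purely local claim that such a $Q$ is necessarily nilpotent: for $n$ large enough $R(Q^n)$ is closed and $Q_n:=Q|_{R(Q^n)}$ is injective (finite ascent) with closed range, i.e. bounded below, while $Q_n$ is again quasinilpotent, its spectral radius being dominated by that of $Q$; if $R(Q^n)$ were non-trivial this would be absurd, for $\sigma_a(Q_n)$ would then be a non-empty subset of $\sigma(Q_n)=\{0\}$, contradicting that $Q_n$ is bounded below. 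Hence $Q$ is nilpotent, $T-\lambda I=Q\oplus(T_2-\lambda I)$ is Drazin invertible, and $\lambda\in\Pi(T)$. (Alternatively one could reduce, via \cite[Theorem~1]{KS}, to the case $T=T_1\oplus\dots\oplus T_n$ with each $T_i-\lambda_i$ Riesz and argue summand by summand; but the argument above is self-contained.)
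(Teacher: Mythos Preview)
Your proof is correct. Parts (i)--(iii) follow the same route as the paper, only more explicitly: the paper records that the countability of $\sigma(T)$ gives $\sigma(T)=\partial\sigma(T)$, hence SVEP for $T$ and $T^*$, and then simply cites \cite[Corollary~2.45 and Corollary~3.53]{A} for (ii), which are precisely the facts you unpack by hand (SVEP of $T^*$ forces surjectivity of bounded-below $T-\lambda I$, and finite descent of upper semi-Fredholm $T-\lambda I$).

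Part (iv) is where you genuinely diverge. The paper disposes of $\Pi(T)=\Pi^a(T)$ in one line by invoking \cite[Theorem~2.7]{BBO} together with \cite[Theorem~12(i)]{Bo3}, i.e.\ by quoting a ready-made identification of the left Drazin spectrum with the Drazin spectrum under the boundary/SVEP hypothesis. Your argument instead localizes at an isolated spectral point via the Riesz projection, reduces to the quasinilpotent summand $Q$, and shows directly that a left Drazin invertible quasinilpotent is nilpotent (using that $Q|_{R(Q^n)}$ is eventually bounded below, which is incompatible with $\sigma(Q|_{R(Q^n)})\subseteq\{0\}$ on a nonzero space). This is a perfectly valid and more self-contained alternative; the only implicit ingredient is the standard fact from semi-B-Fredholm theory that left Drazin invertibility yields an $n$ with $R(Q^n)$ closed and $Q|_{R(Q^n)}$ bounded below, which you use without citation. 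The paper's approach is shorter but less transparent; yours makes the mechanism visible at the cost of a few more lines.
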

\begin{proof} (i). According to Remark \ref{rem28}(iv), $\sigma (T)=\partial \sigma (T)$.
In addition, since  $\sigma (T)=\sigma (T^*)$, $\sigma (T^*)=\partial \sigma (T^*)$.
In particular, $T$ and $T^*$ have the SVEP.\par
\noindent (ii). Apply \cite[Corollary 2.45, Chapter 2, Section 3]{A} and \cite[Corollary 3.53, Chapter 3, Section 4]{A}, respectively.\par
\noindent (iii). It can be derived from statement (ii).\par
\noindent (iv). According to \cite[Theorem 2.7]{BBO} and \cite[Theorem 12(i)]{Bo3},
$\Pi (T)=\Pi^a(T)$, which in turn implies that $\Pi_0(T)=\Pi_0^a(T)$.
\end{proof}

\indent In the following theorem the Fredholm, Weyl and Browder spectra of analytically Riesz operators will be
characterized.\par

\begin{thm}\label{thm2}Let $\X$ be a Banach space and consider $T\in \L (\X)$ such that $\H_{\R NLZ}(\sigma (T))\neq\emptyset$.
Let $h\in \H_{\R NLZ} (\sigma (T))$  be a minimal analytic function associated to $T$. Then, the following statements hold.\par
\noindent \rm (i) \it $\sigma_e (T)= \sigma_w (T)=\sigma_b( T)=h^{-1}_{\sigma (T)}(0)$.\par
\noindent \rm (ii) \it $\sigma (T)\setminus h^{-1}_{\sigma (T)}(0)=\Pi_0 (T)$.\par
\end{thm}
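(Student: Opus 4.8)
The goal is to prove the two displayed identities for a minimal analytic function $h \in \H_{\R NLZ}(\sigma(T))$. I will use the spectral decomposition of $h$ guaranteed by Remark~\ref{rem28}(iii): writing $h^{-1}_{\sigma(T)}(0) = \{\lambda_1,\dots,\lambda_n\}$, we have $h(z) = (z-\lambda_1)^{k_1}\cdots(z-\lambda_n)^{k_n}g(z)$ with $g$ nonvanishing on a neighbourhood of $\sigma(T)$. Hence $g(T)$ is invertible, and $h(T) = (T-\lambda_1)^{k_1}\cdots(T-\lambda_n)^{k_n}g(T)$ is Riesz, so $\sigma_e(h(T)) = \{0\}$.

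\emph{Step 1: the inclusion $h^{-1}_{\sigma(T)}(0) \subseteq \sigma_e(T)$.} Here I expect the \emph{main obstacle}. Remark~\ref{rem28}(i) already gives $\sigma_e(T) \subseteq \sigma_w(T) \subseteq \sigma_b(T) \subseteq h^{-1}(0)$, but to get equality throughout I must rule out the possibility that some $\lambda_i$ fails to lie in $\sigma_e(T)$, i.e.\ that $T-\lambda_i$ is Fredholm. Suppose, for contradiction, that $\lambda_j \notin \sigma_e(T)$ for some $j$; then, because $T$ and $T^*$ have SVEP (Corollary~\ref{cor4}(i)) and $\lambda_j \in \sigma(T)$, the point $\lambda_j$ is actually a pole of the resolvent — in fact $\lambda_j \in \Pi_0(T) = \sigma(T)\setminus\sigma_b(T)$, so $T - \lambda_j$ is Browder with finite ascent/descent, say of order $p$. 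Then I would build a strictly smaller analytic function: replace the factor $(z-\lambda_j)^{k_j}$ in $h$ by $(z-\lambda_j)^{p}$ if $p < k_j$, or argue that we may even delete it entirely. The key point is that on the spectral subspace corresponding to $\{\lambda_j\}$ (which is finite-dimensional, since $\lambda_j$ is a pole of finite rank), $T$ restricts to an operator whose only spectral value is $\lambda_j$, so any polynomial in $T$ restricted there is automatically Riesz (finite rank); while on the complementary spectral subspace $\lambda_j$ is in the resolvent. Gluing, one obtains $\tilde h \in \H_{\R NLZ}(\sigma(T))$ with $\tilde h^{-1}_{\sigma(T)}(0) = \{\lambda_1,\dots,\lambda_n\}\setminus\{\lambda_j\}$, contradicting minimality of $n_T$. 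This forces $h^{-1}_{\sigma(T)}(0) \subseteq \sigma_e(T)$, and combining with Remark~\ref{rem28}(i) yields (i).

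\emph{Step 2: deducing (ii).} Once (i) is established, $\sigma_b(T) = h^{-1}_{\sigma(T)}(0)$, so
\[
\sigma(T)\setminus h^{-1}_{\sigma(T)}(0) = \sigma(T)\setminus\sigma_b(T) = \Pi_0(T),
\]
where the last equality is the identity $\sigma(T)\setminus\sigma_b(T) = \Pi_0(T)$ recorded in Section~2 (from \cite[Proposition 2]{Ba}). This is immediate and requires no further work.

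\emph{Remarks on execution.} The delicate part of Step~1 is the surgery that produces $\tilde h$: one must check that the new function is holomorphic on a neighbourhood of $\sigma(T)$ (use that $\{\lambda_j\}$ is isolated in $\sigma(T)$ by Remark~\ref{rem28}(ii), so there is a Riesz idempotent $P_j$ commuting with $T$ and a clopen decomposition $\sigma(T) = \{\lambda_j\} \sqcup \sigma_j'$), that $\tilde h(T)$ is still Riesz (it splits as the direct sum of a finite-rank operator on $P_j\X$ and a Riesz operator obtained from $h$ on $(I-P_j)\X$, after dividing out the now-invertible factor), and that $\tilde h$ is not locally zero at any point of $\sigma(T)$ (inherited from $h$ away from $\lambda_j$, and clear at $\lambda_j$ since the new exponent is a positive integer, or the factor is absent). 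I would present this as a short lemma-style argument inside the proof. No other subtlety arises; statements (i)–(iv) of Corollary~\ref{cor4} and Remark~\ref{rem28} supply everything else.
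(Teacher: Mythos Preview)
Your argument is correct, and part (ii) matches the paper exactly. For part (i), however, the paper takes a more direct route: assuming $\lambda_j\notin\sigma_e(T)$, it works entirely in the Calkin algebra, noting that $\pi((T-\lambda_j)^{k_j})$ is invertible there and commutes with the quasinilpotent element $\pi(h(T))$, so that $\pi(h_1(T))=\pi(h(T))\cdot[\pi((T-\lambda_j)^{k_j})]^{-1}$ is again quasinilpotent (spectral radius is submultiplicative on commuting elements), i.e.\ $h_1(T)\in\R(\X)$ --- contradicting minimality. This avoids SVEP, the Riesz idempotent, and any discussion of poles. Your route instead invokes Corollary~\ref{cor4}(i) to upgrade ``Fredholm'' to ``Browder'', then splits $\X$ along the finite-rank spectral projection at $\lambda_j$; this is more geometric and makes the finite-dimensional piece explicit, but it costs extra machinery that the Calkin-algebra computation sidesteps. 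One small point to clean up: merely lowering the exponent from $k_j$ to $p<k_j$ does not reduce $\sharp\,\tilde h^{-1}_{\sigma(T)}(0)$ and so cannot contradict the minimality of $n_T$; you must delete the factor $(z-\lambda_j)^{k_j}$ entirely, as you indeed indicate in the final version of your $\tilde h$.
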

\begin{proof}(i). According to Remark \ref{rem28}(i) (\cite[Theorem 1]{KS}), it is enough to prove that $h^{-1}_{\sigma (T)}(0)\subseteq \sigma_e(T)$. 
Recall that there exist $n\in\mathbb{N}$ and $\lambda_1,\dots  , \lambda_n\in\sigma (T)$ such that 
$h^{-1}_{\sigma (T)}(0)=\{\lambda_1,\dots  , \lambda_n\}$ and $h(z)=(z-\lambda_1)^{k_1}\dots (z-\lambda_n)^{k_n}g(z)$,
where $g$ is an analytic function defined on a neighbourhod of $\sigma (T)$ such that $g(z)\neq 0$, $z\in \sigma (T)$.
Suppose that there is $j$, $1\le j\le n$, such that $\lambda_j\notin\sigma_e (T)$. In particular,
$(z-\lambda_j)^{k_j}\in \L (\X)$ is a Fredholm operator.\par

\indent Let $U_i\in \L (\X)$ and $K_i\in \K (\X)$, $i=1, 2$, such that $U_1 (T-\lambda_j)^{k_j}=I-K_1$ and $ (T-\lambda_j)^{k_j}U_2=I-K_2$.
Set $h_1(z)=\Pi_{1\le i\le n, i\neq j}(z-\lambda_i)^{k_i}g(z)$ and consider $\Pi \colon \L (\X)\to\C (\X)$. Note that
$\sigma (\pi (h(T)))=\{ 0\}$, $\pi (U_1)=\pi (U_2)= (\pi(T-\lambda_j)^{k_j})^{-1}$ and
$$
\pi (h(T))=\pi (h_1(T))\pi(T-\lambda_j)^{k_j}=\pi(T-\lambda_j)^{k_j}\pi (h_1(T)).
$$
\indent However, an easy calculaton proves that 
$$
\pi (U_1) \pi (h(T))=\pi (h(T))\pi (U_1)=\pi (h_1(T)).
$$
\noindent In particular, $\sigma (\pi (h_1(T))=\{ 0\}$, equivalenty $h_1(T)\in \R (\X)$, which is impossible
because $n=n_T$.\par
\noindent (ii). Apply statement (i) and \cite[Proposition 2]{Ba}.
\end{proof}

\begin{rema}\label{rema3}\rm  Let $\X$ be a Banach space and  $T\in \L (\X)$. Suppose that $\H_{\R NLZ}(\sigma (T))\neq\emptyset$
an let $h\in \H_{\R NLZ} (\sigma (T))$  be a minimal analytic function associated to $T$.\par
\noindent (i). Recall that $\Pi (T)\subseteq $ iso $\sigma (T)$. 
Let $I(T)=$ iso $\sigma (T)\setminus\Pi (T)$. Then, according to \cite[Theorem 12]{Bo3}, $\sigma (T)=\sigma_{DR} (T)\cup \Pi (T)$, 
$\sigma_{DR} (T)\cap \Pi (T)=\emptyset$ and  $\sigma_{DR}(T)=\hbox{ \rm acc }\sigma (T)\cup I(T)$. Then, since $\sigma_{DR}(T)\subseteq \sigma_b (T)$,
according to Theorem  \ref{thm2},
$$
h^{-1}_{\sigma (T)}(0)= \sigma_e (T)= \sigma_b( T)=\hbox{ acc } \sigma (T)\cup I(T)\cup (\Pi (T)\setminus \Pi_0 (T)).
$$
\noindent (ii). Suppose that $T\in \L (\X)$ is polynomially Riesz (respectively polynomially compact) and consider $Q\in C[X]$ a minimal polynomial associated to
$T$. Applying arguments similar to the ones in Remark \ref{rem28}(i) and Theorem  \ref{thm2} it is not difficult to pove that
$$
\sigma_e (T)= \sigma_w (T)=\sigma_b( T)=\{\lambda\in\mathbb{C}\colon Q(\lambda)=0\}.
$$
Therefore, when $T$ is polynomially Riesz (respectively polynomially compact) and $Q$ is a minimal polynomial associated to
$T$, $n_T=\sharp \{\lambda\in\mathbb{C}\colon Q(\lambda)=0\}$.
\end{rema}

\indent Next the Weyl approximation essential point spectrum and the Browder approximation
essential point spectrum will be characterized.\par

\begin{thm}\label{cor44}Let $\X$ be a Banach space and consider $T\in \L (\X)$ such that $\H_{\R NLZ}(\sigma (T))\neq\emptyset$.
Let $h\in \H_{\R NLZ} (\sigma (T))$  be a minimal analytic function associated to $T$.Then, the following statements hold.\par
\noindent \rm (i) \it $\sigma (T)\setminus h^{-1}_{\sigma (T)}(0)=\Pi_0^a(T)$.\par
\noindent \rm (ii) \it  $\sigma_{\Phi_+} (T)= \sigma_{aw} (T)=\sigma_{ab}( T)=h^{-1}_{\sigma (T)}(0)$.
\end{thm}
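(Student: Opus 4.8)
The plan is to deduce Theorem \ref{cor44} from the already-proven Theorem \ref{thm2} together with Corollary \ref{cor4}, exploiting the fact that the full spectrum and the approximate point spectrum of $T$ coincide. First I would recall, from Corollary \ref{cor4}(ii), that $\sigma(T)=\sigma_a(T)$ and $\sigma_e(T)=\sigma_{\Phi_+}(T)$; this immediately reduces the ``upper-semi'' versions of the spectra to the ``two-sided'' versions already handled in Theorem \ref{thm2}. For part (ii) I would argue that the chain $\sigma_{\Phi_+}(T)\subseteq\sigma_{aw}(T)\subseteq\sigma_{ab}(T)\subseteq\sigma_a(T)$ always holds, and then show the reverse inclusion $\sigma_{ab}(T)\subseteq\sigma_{\Phi_+}(T)$ collapses everything. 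Since $\sigma_{\Phi_+}(T)=\sigma_e(T)=h^{-1}_{\sigma(T)}(0)$ by Theorem \ref{thm2}(i), it suffices to check $\sigma_{ab}(T)\subseteq h^{-1}_{\sigma(T)}(0)$, i.e. that every $\lambda\in\sigma_a(T)\setminus h^{-1}_{\sigma(T)}(0)$ lies outside $\sigma_{ab}(T)$.

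For that last point I would use Theorem \ref{thm2}(ii): if $\lambda\in\sigma(T)=\sigma_a(T)$ and $\lambda\notin h^{-1}_{\sigma(T)}(0)$, then $\lambda\in\Pi_0(T)$, so $T-\lambda I$ is Browder; in particular it is Fredholm of index zero with finite ascent and descent, hence upper semi-Fredholm with $\mathrm{ind}(T-\lambda I)=0\not>0$ and $\mathrm{asc}(T-\lambda I)<\infty$, which places $\lambda$ outside $\sigma_{ab}(T)$ by definition. This gives $\sigma_{ab}(T)\subseteq h^{-1}_{\sigma(T)}(0)=\sigma_{\Phi_+}(T)$, and combined with the trivial inclusions this yields the three equalities in (ii). Part (i) then follows by combining: $\sigma(T)\setminus h^{-1}_{\sigma(T)}(0)=\sigma_a(T)\setminus\sigma_{ab}(T)=\Pi_0^a(T)$, where the last equality is the identity $\sigma_a(T)\setminus\sigma_{ab}(T)=\Pi_0^a(T)$ recorded in Section 2 (from \cite[Corollary 2.2]{R2}, \cite[Corollary 1.3.3]{CPY}, \cite[Corollary 1.3.4]{CPY}); alternatively one can invoke Corollary \ref{cor4}(iv), $\Pi_0(T)=\Pi_0^a(T)$, together with Theorem \ref{thm2}(ii).

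Concretely, the order of steps I would carry out is: (1) invoke Corollary \ref{cor4}(ii) to rewrite $\sigma_{\Phi_+}(T)=\sigma_e(T)$ and $\sigma_a(T)=\sigma(T)$; (2) note the elementary inclusions $\sigma_{\Phi_+}(T)\subseteq\sigma_{aw}(T)\subseteq\sigma_{ab}(T)\subseteq\sigma_a(T)$; (3) for the reverse direction, take $\lambda\in\sigma_a(T)\setminus h^{-1}_{\sigma(T)}(0)$, apply Theorem \ref{thm2}(ii) to get $\lambda\in\Pi_0(T)$, and read off from the definition of $\Pi_0$ that $T-\lambda I$ is Browder, hence $\lambda\notin\sigma_{ab}(T)$; (4) conclude (ii); (5) derive (i) from $\sigma_a(T)\setminus\sigma_{ab}(T)=\Pi_0^a(T)$ and step (4), or equivalently from Theorem \ref{thm2}(ii) and Corollary \ref{cor4}(iv). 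I do not anticipate a genuine obstacle here, since all the heavy lifting was done in Theorem \ref{thm2}; the only mild care needed is to make sure that being a Browder operator really does certify membership outside $\sigma_{ab}$, which is immediate once one unwinds the definitions $\sigma_{ab}(T)=\{\lambda\in\sigma_a(T)\colon \lambda\in\sigma_{aw}(T)\text{ or }\mathrm{asc}(T-\lambda I)=\infty\}$ and $\sigma_{aw}(T)=\{\lambda\in\sigma_a(T)\colon T-\lambda I\text{ not upper semi-Fredholm or }0<\mathrm{ind}(T-\lambda I)\}$.
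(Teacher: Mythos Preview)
Your proposal is correct and follows essentially the same route as the paper. The only cosmetic differences are that the paper proves (i) first (via Corollary \ref{cor4}(iv) and Theorem \ref{thm2}(ii), your stated alternative) and then closes the chain in (ii) by writing $\sigma_{ab}(T)=\sigma_b(T)$ directly from $\Pi_0^a(T)=\Pi_0(T)$ and $\sigma_a(T)=\sigma(T)$, rather than unwinding the definition of $\sigma_{ab}$ pointwise as you do; the ingredients and logic are the same.
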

\begin{proof}According to Corollary \ref{cor4}(iv) and Theorem \ref{thm2}(ii), statement (i) holds.\par
\indent On the other hand, since $\Pi_0^a(T)=\Pi_0(T)$, according to Theorem \ref{thm2}(i) and Corollary \ref{cor4}(ii),
$$
\sigma_{\Phi_+} (T)\subseteq \sigma_{aw} (T)\subseteq \sigma_{ab}(T)=\sigma_b (T)=\sigma_e (T)=\sigma_{\Phi_+} (T).
$$ 
\end{proof}

\section{Weyl and Browder type theorems}

\indent In first place, Browder type theorems will be considered.\par

\begin{thm}\label{thm4.1}Let $\X$ be a Banach space and consider $T\in \L (\X)$ such that $\H_{\R NLZ}(\sigma (T))\neq\emptyset$.
Then, the following statements hold.\par
\noindent \rm (i) \it (Generalized) Browder's theorem holds for $T$.\par
\noindent \rm (ii) \it (Generalized) $a$-Browder's theorem holds for $T$.\par
\noindent \rm (iii) \it Given $f\in \H (\sigma (T))$, generalized $a$-Browder's theorem holds for $f(T)$ and $f(T^*)$.
\end{thm}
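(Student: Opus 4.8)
The plan is to reduce everything to the standard fact that an operator $S$ satisfies (generalized) Browder's theorem if and only if $S$ has SVEP at every point of $\sigma(S)\setminus\sigma_{BW}(S)$ (equivalently, at every point outside the B-Weyl spectrum), and similarly that $S$ satisfies (generalized) $a$-Browder's theorem if and only if $S$ has SVEP at every point of $\sigma_a(S)\setminus\sigma_{SBF_+^-}(S)$; these characterizations are classical and available from the literature cited in the paper (e.g. Aiena's book). Since Corollary \ref{cor4}(i) already tells us that $T$ and $T^*$ have SVEP \emph{everywhere}, statements (i) and (ii) will follow immediately: SVEP at every point trivially implies SVEP at the relevant subset of the spectrum, so both (generalized) Browder's theorem and (generalized) $a$-Browder's theorem hold for $T$. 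For the equivalence between the classical and generalized versions one uses that, under SVEP, $\sigma_{BW}(T)=\sigma_w(T)$ and $\Pi(T)$, $\Pi_0(T)$ match up appropriately; alternatively one can cite directly that (generalized) Browder's theorem for $T$ is equivalent to Browder's theorem for $T$ when $T$ has SVEP.

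For part (iii) the key extra input is that SVEP is stable under the analytic functional calculus: if $T$ has SVEP then $f(T)$ has SVEP for every $f\in\H(\sigma(T))$, and likewise $f(T^*)=f(T)^*$ has SVEP since $T^*$ has SVEP. Concretely, $f(T)$ having SVEP at every point of $\mathbb{C}$ — in particular at every point of $\sigma_a(f(T))\setminus\sigma_{SBF_+^-}(f(T))$ — is exactly the criterion for generalized $a$-Browder's theorem to hold for $f(T)$, and the same reasoning applied to $T^*$ gives it for $f(T^*)$. So the structure is: (a) recall the SVEP characterization of generalized $a$-Browder; (b) invoke Corollary \ref{cor4}(i); (c) invoke stability of SVEP under functional calculus; (d) conclude.

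The main obstacle, such as it is, is bookkeeping rather than mathematics: one must be careful that the "iff" characterizations of the four theorems in terms of SVEP are quoted in the correct direction and with the correct spectrum (Browder uses SVEP on the complement of $\sigma_{BW}$ or $\sigma_w$; $a$-Browder uses SVEP on the complement of $\sigma_{SBF_+^-}$ or $\sigma_{aw}$, relative to $\sigma_a$). There is also a minor point in (iii): one should note $f(T)^*=f(T^*)$ (valid because $f$ is holomorphic on a neighbourhood of $\sigma(T)=\sigma(T^*)$), so that the SVEP of $T^*$ transfers to $f(T^*)$ through the same functional-calculus stability lemma. No delicate estimate is needed; once the SVEP characterizations are in place, all three parts are one-line deductions from Corollary \ref{cor4}(i), and indeed (iii) subsumes (ii) by taking $f$ to be the identity.
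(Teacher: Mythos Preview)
Your argument is correct, but for parts (i) and (ii) it differs from the paper's route. The paper does not invoke the SVEP characterizations of (generalized) Browder and $a$-Browder at all for these parts: instead it reads Browder's theorem directly off Theorem~\ref{thm2}(i), since $\sigma_w(T)=\sigma_b(T)$ there is exactly the statement $\sigma_w(T)=\sigma(T)\setminus\Pi_0(T)$, and likewise reads $a$-Browder's theorem off Theorem~\ref{cor44}(ii) via $\sigma_{aw}(T)=\sigma_{ab}(T)$; it then cites \cite[Theorems 2.1 and 2.2]{AZ} for the equivalence with the generalized versions. Your approach has the advantage of bypassing Theorems~\ref{thm2} and~\ref{cor44} entirely---Corollary~\ref{cor4}(i) alone suffices---so it is logically more economical within the paper; the paper's approach, on the other hand, exploits the concrete spectral equalities already established and keeps the external citations to a minimum. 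For part (iii) the two approaches essentially coincide: the paper also appeals to Corollary~\ref{cor4}(i) together with a result of Amouch (\cite[Theorem 3.2]{Am}) that packages the stability of SVEP under the functional calculus and the SVEP criterion for generalized $a$-Browder.
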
 
\begin{proof} (i). Theorem \ref{thm2}(i) implies that Browder's theorem holds. According to \cite[Theorem 2.1]{AZ}, Browder's theorem and generalized Browder's theorem are
equivalent. \par
\noindent (ii). Theorem \ref{cor44}(ii) implies that $a$-Browder's theorem holds. According to \cite[Theorem 2.2]{AZ}, $a$-Browder's theorem and generalized $a$-Browder's theorem are
equivalent. \par
\noindent  (iii). Apply Corollary \ref{cor4}(i) and \cite[Theorem 3.2]{Am}.
\end{proof} 

\indent Recall that generalized $a$-Browder's theorem is equivalent to $a$-Browder's theorem (\cite[Theorem 2.2]{AZ}) and it implies
generalized Browder's theorem (\cite[Theorem 3.8]{B6}), which in turn is equivalent to Browder's theorem (\cite[Theorem 2.1]{AZ}). Therefore, under the same hypothesis of
Theorem \ref{thm4.1}(iii),   (generalized) Browder's theorem and $a$-Browder's theorem hold for $f(T)$ and $f(T^*)$.\par

\begin{cor}\label{cor4.2}Let $\X$ be a Banach space and consider $T\in \L (\X)$ such that $\H_{\R NLZ}(\sigma (T))\neq\emptyset$.
Then,
$$
\sigma_{BW} (T)=\sigma_{DR} (T)=\sigma_{LD} (T)=\sigma_{SBF_+^-}(T).
$$
\end{cor}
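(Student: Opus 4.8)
The plan is to establish the chain of equalities by exhibiting all four sets as the single set $h^{-1}_{\sigma(T)}(0)$, where $h\in\H_{\R NLZ}(\sigma(T))$ is a minimal analytic function associated to $T$ (which exists since $\H_{\R NLZ}(\sigma(T))\neq\emptyset$). I already know from Theorem~\ref{thm2}(i) that $\sigma_e(T)=\sigma_w(T)=\sigma_b(T)=h^{-1}_{\sigma(T)}(0)$ and from Theorem~\ref{cor44}(ii) that $\sigma_{\Phi_+}(T)=\sigma_{aw}(T)=\sigma_{ab}(T)=h^{-1}_{\sigma(T)}(0)$. So the task reduces to showing that $\sigma_{BW}(T)$, $\sigma_{DR}(T)$, $\sigma_{LD}(T)$ and $\sigma_{SBF_+^-}(T)$ all coincide with this same finite set.

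First I would recall the general inclusions available for \emph{any} operator, already collected in the preliminaries: $\sigma_{BW}(T)\subseteq\sigma_{DR}(T)$ (via $\sigma_{BB}=\sigma_{DR}$ and $\sigma_{BW}\subseteq\sigma_{BB}$), $\sigma_{SBF_+^-}(T)\subseteq\sigma_{LD}(T)\subseteq\sigma_a(T)$, and the elementary fact that $\sigma_{BF}(T)\subseteq\sigma_{BW}(T)$ together with $\sigma_{SBF_+^-}(T)\subseteq\sigma_{DR}(T)$ in the presence of SVEP. Then I would use Corollary~\ref{cor4}: $T$ and $T^*$ have SVEP, and $\sigma(T)=\sigma_a(T)$. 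SVEP of $T^*$ forces $\sigma_{BW}(T)=\sigma_{DR}(T)$ (this is the standard equivalence: SVEP of the dual at a point of the B-Weyl resolvent upgrades B-Weyl to Drazin invertibility — equivalently generalized Browder's theorem, which by Theorem~\ref{thm4.1}(i) holds for $T$, says precisely $\sigma_{BW}(T)=\sigma(T)\setminus\Pi(T)=\sigma_{DR}(T)$). Dually, SVEP of $T$ forces $\sigma_{SBF_+^-}(T)=\sigma_{LD}(T)$ (this is generalized $a$-Browder's theorem, which holds for $T$ by Theorem~\ref{thm4.1}(ii): $\sigma_{SBF_+^-}(T)=\sigma_a(T)\setminus\Pi^a(T)=\sigma_{LD}(T)$).

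It remains to link the two pairs. Using $\sigma(T)=\sigma_a(T)$ (Corollary~\ref{cor4}(ii)) and $\Pi(T)=\Pi^a(T)$ (Corollary~\ref{cor4}(iv)), I get $\sigma_{DR}(T)=\sigma(T)\setminus\Pi(T)=\sigma_a(T)\setminus\Pi^a(T)=\sigma_{LD}(T)$, which closes the loop. Assembling: $\sigma_{BW}(T)=\sigma_{DR}(T)=\sigma_{LD}(T)=\sigma_{SBF_+^-}(T)$, and by Theorem~\ref{thm2} and Theorem~\ref{cor44} each of these equals $h^{-1}_{\sigma(T)}(0)$. The only genuinely delicate point is invoking the two generalized Browder equivalences in the right direction — generalized Browder for $T$ from SVEP of $T^*$, generalized $a$-Browder for $T$ from SVEP of $T$ — but since Theorem~\ref{thm4.1} already records that both hold, the argument is essentially bookkeeping on the set identities. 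A cleaner write-up simply reads off all four spectra from Theorems~\ref{thm2} and \ref{cor44} after noting that generalized Browder's theorem gives $\sigma_{BW}(T)=\sigma(T)\setminus\Pi(T)$ and generalized $a$-Browder's theorem gives $\sigma_{SBF_+^-}(T)=\sigma_a(T)\setminus\Pi^a(T)$, then combining with $\sigma(T)=\sigma_a(T)$ and $\Pi(T)=\Pi^a(T)$ from Corollary~\ref{cor4}.
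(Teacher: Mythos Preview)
Your proof is correct and follows essentially the same route as the paper: use Theorem~\ref{thm4.1} to obtain $\sigma_{BW}(T)=\sigma_{DR}(T)$ (generalized Browder) and $\sigma_{SBF_+^-}(T)=\sigma_{LD}(T)$ (generalized $a$-Browder), then bridge $\sigma_{DR}(T)=\sigma_{LD}(T)$ via $\sigma(T)=\sigma_a(T)$ and $\Pi(T)=\Pi^a(T)$ from Corollary~\ref{cor4}. The only cosmetic difference is that the paper cites \cite[Theorem 2.7]{BBO} directly for this last equality, whereas you route it through Corollary~\ref{cor4}(iv), which itself rests on that same reference; the extra identification with $h^{-1}_{\sigma(T)}(0)$ is not needed for the statement but is harmless.
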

\begin{proof}Note that generalized Browder's theorem (respectively $a$-generalized Browder's theorem)
is equivalent to $\sigma_{BW} (T)=\sigma_{DR} (T)$ (respectively $\sigma_{LD} (T)$
$=\sigma_{SBF_+^-}(T)$).
In adition, since $\sigma (T)=\sigma_a(T)$ and $\sigma (T)=\partial (T)$ (Corollary \ref{cor4}),  according to \cite[Theorem 2.7]{BBO}, $\sigma_{DR} (T)=\sigma_{LD} (T)$.
\end{proof}

\indent Next Weyl type theorems will be considered.\par

\begin{thm}\label{thm3}Let $\X$ be a Banach space and consider $T\in \L (\X)$ such that $\H_{\R NLZ}(\sigma (T))\neq\emptyset$.
Then, the following statements are equivalent.\par
\noindent \rm (i) \it Weyl's theorem holds for $T$ (respectively $T^*$).\par
\noindent \rm (ii) \it $a$-Weyl's theorem holds for $T$ (respectively $T^*$).\par
\noindent In addition, the following statements are equivalent.\par
\noindent \rm (iii) \it Generalized Weyl's theorem holds for $T$ (respectively $T^*$).\par
\noindent \rm (iv) \it Generalized $a$-Weyl's theorem holds for $T$ (respectively $T^*$).\par
\end{thm}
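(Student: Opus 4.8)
The plan is to derive both equivalences directly from the spectral identities established in Section~3, after first checking that the standing hypothesis passes to the dual operator. Accordingly, the first step I would carry out is to verify that $\H_{\R NLZ}(\sigma (T^*))=\H_{\R NLZ}(\sigma (T))$ and that $n_{T^*}=n_T$, so that a minimal analytic function $h$ associated to $T$ serves also as one associated to $T^*$. This rests on three elementary observations: $\sigma (T^*)=\sigma (T)$, so ``not locally zero at any point of the spectrum'' is the same requirement for $T$ and for $T^*$; $f(T^*)=f(T)^*$ for every $f\in\H(\sigma (T))$; and $\sigma_e(S)=\sigma_e(S^*)$ for every $S\in\L(\X)$, so $f(T)$ is Riesz precisely when $f(T^*)$ is. Granting this, Corollary~\ref{cor4}, Theorem~\ref{thm2}, Theorem~\ref{cor44} and Corollary~\ref{cor4.2} all apply verbatim to $T^*$ as well as to $T$, and it suffices to prove the two equivalences for $T$; the argument for $T^*$ is then word for word the same.

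For $\mathrm{(i)}\Leftrightarrow\mathrm{(ii)}$ I would combine Theorem~\ref{thm2}(i) and Theorem~\ref{cor44}(ii), which give $\sigma_w(T)=h^{-1}_{\sigma (T)}(0)=\sigma_{aw}(T)$, with Corollary~\ref{cor4}(ii)--(iii), which give $\sigma (T)=\sigma_a(T)$ and $E_0(T)=E_0^a(T)$. Substituting these identities, the equality $\sigma_w(T)=\sigma (T)\setminus E_0(T)$ defining Weyl's theorem for $T$ is literally the same statement as the equality $\sigma_{aw}(T)=\sigma_a(T)\setminus E_0^a(T)$ defining $a$-Weyl's theorem for $T$, so each holds if and only if the other does. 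For $\mathrm{(iii)}\Leftrightarrow\mathrm{(iv)}$ I would use Corollary~\ref{cor4.2}, which gives $\sigma_{BW}(T)=\sigma_{SBF_+^-}(T)$, together with $\sigma (T)=\sigma_a(T)$ and $E(T)=E^a(T)$ from Corollary~\ref{cor4}(ii)--(iii); again these identities identify the equation $\sigma_{BW}(T)=\sigma (T)\setminus E(T)$ defining generalized Weyl's theorem with the equation $\sigma_{SBF_+^-}(T)=\sigma_a(T)\setminus E^a(T)$ defining generalized $a$-Weyl's theorem.

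Once the results of Section~3 are available the argument is genuinely short, so I do not anticipate a serious difficulty; the only point requiring real (if minor) care is the reduction to the dual — confirming that $\H_{\R NLZ}(\sigma (T))=\H_{\R NLZ}(\sigma (T^*))$ and that a minimal analytic function may be taken common to $T$ and $T^*$ — since the characterizations in Theorems~\ref{thm2} and \ref{cor44} are phrased in terms of such a function. Everything else amounts to substituting the established spectral equalities into the definitions of the four Weyl-type theorems.
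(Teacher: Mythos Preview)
Your argument is correct, but it takes a different route from the paper's. The paper's proof is two lines: it invokes Corollary~\ref{cor4}(i) to obtain that $T$ and $T^*$ have the SVEP, and then cites two external results (\cite[Theorem~3.6]{A0} and \cite[Theorem~3.1]{Am}) which assert, in effect, that for operators with SVEP the Weyl and $a$-Weyl theorems coincide, and likewise for their generalized versions. So the paper reduces everything to the single qualitative fact that the spectrum is countable (hence SVEP holds), and then appeals to general SVEP machinery.

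Your approach, by contrast, is entirely internal to the paper: you substitute the explicit spectral equalities $\sigma_w(T)=\sigma_{aw}(T)$, $\sigma_{BW}(T)=\sigma_{SBF_+^-}(T)$, $\sigma(T)=\sigma_a(T)$, $E_0(T)=E_0^a(T)$ and $E(T)=E^a(T)$ directly into the definitions. This is more elementary and self-contained---it avoids the two cited references---at the cost of a slightly longer write-up and the (minor) extra step of checking that the hypothesis $\H_{\R NLZ}(\sigma(T))\neq\emptyset$ transfers to $T^*$. Incidentally, for the dual case you do not strictly need the minimal analytic functions to coincide: once you have $\H_{\R NLZ}(\sigma(T^*))\neq\emptyset$, the results of Section~3 apply to $T^*$ with \emph{some} minimal function, and the identities you need (e.g.\ $\sigma_w(T^*)=\sigma_{aw}(T^*)$) follow without ever comparing $h$ for $T$ and $T^*$.
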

\begin{proof}Recall that $T$ and $T^*$ have the SVEP (Corollary \ref{cor4}(i)).
Then apply \cite[Theorem 3.6]{A0} and \cite[Theorem 3.1]{Am}
\end{proof}

\begin{thm}\label{thm3}Let $\X$ be a Banach space and consider $T\in \L (\X)$ such that $\H_{\R NLZ}(\sigma (T))\neq\emptyset$.
Suppose that \rm iso \it $\sigma (T)=E (T)=\Pi (T)$. Then, given $f\in\H (\sigma (T))$,
generalized $a$-Weyl's theorem holds for $f(T)$.
\end{thm}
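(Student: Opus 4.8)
The plan is to reduce generalized $a$-Weyl's theorem for $f(T)$ to the conjunction of (a) generalized $a$-Browder's theorem for $f(T)$, which we already have from Theorem \ref{thm4.1}(iii), and (b) the spectral mapping-type identity $E^a(f(T)) = f(E^a(T))$ together with $E^a(f(T)) = \Pi^a(f(T))$. Since generalized $a$-Browder's theorem gives $\sigma_{SBF_+^-}(f(T)) = \sigma_a(f(T))\setminus \Pi^a(f(T))$, it suffices to show $\Pi^a(f(T)) = E^a(f(T))$, because generalized $a$-Weyl's theorem asserts $\sigma_{SBF_+^-}(f(T)) = \sigma_a(f(T))\setminus E^a(f(T))$. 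So the whole proof comes down to establishing $\Pi^a(f(T)) = E^a(f(T))$.

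First I would record the consequences of the hypothesis $\hbox{iso }\sigma(T) = E(T) = \Pi(T)$. Combined with Corollary \ref{cor4}(ii)--(iv), which gives $\sigma(T) = \sigma_a(T)$, $E(T) = E^a(T)$, $E_0(T) = E_0^a(T)$, $\Pi(T) = \Pi^a(T)$ and $\Pi_0(T) = \Pi_0^a(T)$, the hypothesis also reads $\hbox{iso }\sigma_a(T) = E^a(T) = \Pi^a(T)$. In particular $E(T) = \Pi(T)$ says generalized Weyl's theorem and generalized $a$-Weyl's theorem already hold for $T$ itself (by Theorem \ref{thm4.1}(i)--(ii) and Theorem \ref{thm3}, the previous theorem, applied to $T$). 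I would next invoke the fact that $T$, hence also $f(T)$, has SVEP (Corollary \ref{cor4}(i), and SVEP is preserved under the analytic functional calculus). By \cite[Theorem 2.7]{BBO} or the standard SVEP characterization, SVEP of $f(T)$ forces $\sigma_a(f(T)) = \sigma(f(T)) = f(\sigma(T))$ and $\Pi^a(f(T)) = \Pi(f(T)) = \sigma(f(T))\setminus\sigma_{DR}(f(T))$; moreover $\sigma_{DR}$ satisfies the spectral mapping theorem, so $\Pi(f(T)) = f(\sigma(T))\setminus f(\sigma_{DR}(T)) = f(\Pi(T))$ once one checks that $f$ separates $\Pi(T)$ from $\sigma_{DR}(T)$, which holds because $\Pi(T) = \hbox{iso }\sigma(T)$ and $\sigma(T)$ is countable with $\sigma_{DR}(T) = \hbox{acc }\sigma(T)$ by Remark \ref{rema3}(i). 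Thus $\Pi^a(f(T)) = f(\Pi(T)) = f(E(T))$.

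It then remains to identify $E^a(f(T))$ with $f(E(T))$. Here I would use that $\sigma_a(f(T)) = \sigma(f(T)) = f(\sigma(T))$ and that $\sigma(T)$ is countable (Remark \ref{rem28}(iv)), so every point of $f(\sigma(T))$ is isolated in $f(\sigma(T))$ precisely when all of its preimages in $\sigma(T)$ are isolated in $\sigma(T)$ — this is where I expect the main technical obstacle, since an isolated point of $\sigma(T)$ can map to a non-isolated point of $f(\sigma(T))$ if other spectral points accumulate onto its image, so one must argue that $\mu \in \hbox{iso }f(\sigma(T))$ implies $f^{-1}(\mu)\cap\sigma(T) \subseteq \hbox{iso }\sigma(T) = \Pi(T)$, and conversely that $f(\Pi(T))\subseteq \hbox{iso }f(\sigma(T))$ because $f(\Pi(T)) = \Pi(f(T))$ is disjoint from $\sigma_{DR}(f(T)) = f(\sigma_{DR}(T)) = \hbox{acc }f(\sigma(T))$. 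Granting this, $\hbox{iso }\sigma_a(f(T)) = f(\hbox{iso }\sigma(T)) = f(E(T))$, and since every $\lambda\in E(T)$ is an eigenvalue of $T$, every $\mu = f(\lambda)$ with $\lambda\in E(T)$ is an eigenvalue of $f(T)$ (the functional calculus maps eigenvectors to eigenvectors, using $f(\mu) - f(T) = (\lambda - T)g(T)$ form of the factorization), so $E^a(f(T)) = \hbox{iso }\sigma_a(f(T)) = f(E(T)) = \Pi^a(f(T))$. Combining this with generalized $a$-Browder's theorem for $f(T)$ from Theorem \ref{thm4.1}(iii) yields $\sigma_{SBF_+^-}(f(T)) = \sigma_a(f(T))\setminus \Pi^a(f(T)) = \sigma_a(f(T))\setminus E^a(f(T))$, which is generalized $a$-Weyl's theorem for $f(T)$.
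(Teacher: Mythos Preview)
Your overall reduction is sound: since generalized $a$-Browder's theorem already holds for $f(T)$ (Theorem \ref{thm4.1}(iii)), the whole statement is equivalent to $E^a(f(T))=\Pi^a(f(T))$, and under SVEP of $f(T)$ this is $E(f(T))=\Pi(f(T))$. The paper does not carry this out, though; its proof is just ``SVEP for $T$ (Corollary \ref{cor4}(i)) and then invoke \cite[Corollary 2.4]{AZ}''. So your route is genuinely different and more self-contained.

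However, your execution of $E(f(T))=\Pi(f(T))$ has a real gap. You try to pass through the identities $\Pi(f(T))=f(\Pi(T))$ and $E(f(T))=f(E(T))$, and for the first you need $f(\Pi(T))\cap f(\sigma_{DR}(T))=\emptyset$, which you justify only by saying ``$\Pi(T)=\hbox{iso }\sigma(T)$ and $\sigma(T)$ is countable''. That is not enough: nothing prevents $f$ from sending some $\lambda\in\Pi(T)$ to the same value as some $\mu\in\sigma_{DR}(T)$. For a concrete obstruction, take $\sigma_{DR}(T)=\{0\}$, $2\in\Pi(T)$, and $f(z)=z(z-2)$; then $f(2)=0\in f(\sigma_{DR}(T))$, so $0\in f(\Pi(T))\setminus\Pi(f(T))$, and likewise $f(\hbox{iso }\sigma(T))\not\subseteq\hbox{iso }f(\sigma(T))$. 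Your argument for the converse inclusion is also circular: you deduce $f(\Pi(T))\subseteq\hbox{iso }f(\sigma(T))$ from $f(\Pi(T))=\Pi(f(T))$, which is precisely what is in question.

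The fix is to bypass $f(\Pi(T))$ entirely and argue the inclusion $E(f(T))\subseteq\Pi(f(T))$ directly (the reverse is automatic, since poles are isolated eigenvalues). If $\mu\in E(f(T))\subseteq\hbox{iso }\sigma(f(T))$ and $f$ is non-constant on each component (the locally constant pieces give scalar summands, for which the conclusion is trivial), then every $\lambda\in f^{-1}(\mu)\cap\sigma(T)$ lies in $\hbox{iso }\sigma(T)$: otherwise $\lambda\in\hbox{acc }\sigma(T)$, and the identity theorem forces $f(\lambda_n)\neq\mu$ for a sequence $\lambda_n\to\lambda$ in $\sigma(T)$, contradicting $\mu$ isolated. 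By hypothesis $\hbox{iso }\sigma(T)=\Pi(T)$, so $f^{-1}(\mu)\cap\sigma(T)\subseteq\Pi(T)$, hence $f^{-1}(\mu)\cap\sigma_{DR}(T)=\emptyset$, hence $\mu\notin f(\sigma_{DR}(T))=\sigma_{DR}(f(T))$, i.e.\ $\mu\in\Pi(f(T))$. This, together with generalized $a$-Browder for $f(T)$, completes the proof along your lines.
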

\begin{proof}Apply Corollary \ref{cor4}(i) and \cite[Corollary 2.4]{AZ}
\end{proof}

\indent Since  generalized $a$-Weyl's theorem implies $a$-Weyl's theorem (\cite[Theorem 3.11]{B6}),
generalized Weyl's theorem (\cite[Theorem 3.7]{B6}) and Weyl's theorem (\cite[Corollary 3.10]{B6}), the statement of Theorem \ref{thm3}
holds if instead of generalized $a$-Weyl's theorem the other Weyl type theorems are considered.

\bibliographystyle{amsplain}

\vskip.3truecm
\noindent Enrico Boasso\par
\noindent E-mail address: enrico\_odisseo@yahoo.it
\end{document}